\documentclass[10pt,a4paper,oneside]{article}
\usepackage[a4paper,left=1.3in,right=1in,top=1.3in,bottom=1in]{geometry}
\usepackage{amsmath,amsthm,pstricks-add,enumerate}
\usepackage[dvips]{graphicx}
\usepackage{color}
\usepackage{mathtools}
\usepackage{hyperref}
\usepackage{amssymb,color}
\usepackage{cite}
\usepackage{pdfsync}
\usepackage{orcidlink}
\usepackage{hyperref}
\usepackage{amsmath,amsthm,amssymb,enumerate}

\allowdisplaybreaks
\normalsize
\newtheoremstyle{thm}{\baselineskip}{\baselineskip}{\slshape}{}{\bfseries}{.}{ }{ }

\theoremstyle{thm}
\newtheorem{thm}{Theorem}[section]
\newtheorem{cor}[thm]{Corollary}

\theoremstyle{remark}

\begin{document}
	\title{Degree–Adjacency–Diametrical Matrix of Graphs: Spectral Properties and Energy Bounds}
\author{
	S. Celine Prabha$^{1}$, Dhivviyanandam I $^{2*}$\\
	$^{1}$Department of Mathematics\\
	SRM Institute of Science and Technology\\
	Kattankulathur, Chengalpattu--603203, Tamil Nadu, India\\
	\texttt{celineprabha.s@gmail.com}\\
    orcid id: 0000-0002-4621-2450\\
	$^{2*}$Vatican Observatory, Specola Vaticana\\
	V-00120, Vatican City State\\
	\texttt{dhivviyanandam@specola.va}\\
    orcid Id: 0000-0002-3805-6638\\
	}

\date{}
\maketitle

\begin{abstract}
	Graph matrices that combine structural and metric information have attracted considerable attention in spectral graph theory. In this paper, we introduce the \emph{Degree--Adjacency--Diametrical (DAD) matrix} of a connected graph $G$. The DAD matrix is defined by
	\[
	(DAD(G))_{ij}=
	\begin{cases}
		d(v_i), & \text{if } i=j,\\
		1, & \text{if } v_i\sim v_j,\\
		\operatorname{diam}(G), & \text{if } d(v_i,v_j)=\operatorname{diam}(G),\\
		0, & \text{otherwise},
	\end{cases}
	\]
	where $d(v_i)$ denotes the degree of the vertex $v_i$. We establish fundamental spectral properties of the DAD matrix by deriving trace identities, characteristic polynomials, and spectra for several classes of graphs. Furthermore, we obtain a spectral expression for the Wiener index of graphs with diameter two, derive upper bounds for the largest DAD eigenvalue, and establish lower bounds for the DAD energy. Explicit spectral characterizations are also obtained for complete multipartite graphs. The proposed matrix provides a unified spectral framework that simultaneously incorporates degree, adjacency, and diametrical information, thereby enriching the study of graph spectra and introducing a new graph matrix for spectral graph theory.
\end{abstract}

\textbf{Keywords:}
Degree, Adjacency Diametrical matrix, spectral graph theory, characteristic
polynomial, eigenvalues, graph energy.

\textbf{2020 Mathematics Subject Classification:}
05C12, 05C50, 05C69, 05C15, 05C76

\section{Introduction}

Spectral graph theory is an important branch of graph theory that studies the
relationship between the structural properties of graphs and the eigenvalues of
matrices associated with graphs. Various graph matrices, such as adjacency
matrices, Laplacian matrices, signless Laplacian matrices, and distance matrices,
have been extensively investigated due to their applications in chemistry,
network analysis, communication systems, and combinatorial optimization. The
eigenvalues of these matrices, known as graph spectra, provide significant
information about the structural characteristics of graphs. A comprehensive
introduction to the theory of graph spectra and its applications can be found in
the works of Cvetkovi\'c, Rowlinson, and Simi\'c~\cite{cvetkovic2010} and
Bapat~\cite{bapat2010}.

Among the different graph matrices, the distance matrix has received
considerable attention because it incorporates global structural information of
a graph. For a connected graph $G$ with vertex set
$V(G)=\{v_1,v_2,\ldots,v_n\}$, the distance matrix $D(G)$ is defined as the
matrix whose $(i,j)$-entry represents the distance between the vertices
$v_i$ and $v_j$. Unlike adjacency matrices, which describe local connections,
distance matrices capture the overall metric structure of graphs. The spectral
properties of distance matrices have been extensively studied, and several
fundamental results on distance spectra are summarized in the survey by
Aouchiche and Hansen~\cite{aouchiche2014}. The development of Laplacian-type
matrices associated with distance matrices has further expanded the study of
distance-based spectral parameters~\cite{aouchiche2013}.

The investigation of distance spectra has led to numerous results concerning
distance eigenvalues, distance energy, cospectral graphs, and relationships
between structural parameters and eigenvalues. Indulal studied the distance
spectra of graph compositions and obtained spectral characterizations for
several graph operations~\cite{indulal2009}. Stevanovi\'c and Indulal
investigated the distance spectrum and energy of compositions of regular graphs,
establishing important connections between graph structure and distance
eigenvalues~\cite{stevanovic2009}. Further developments in distance spectral
theory, including generalized distance matrices and their applications, have
been presented by DeVille~\cite{deville2022}. Recent surveys on variants of
distance matrices and their spectral properties demonstrate the continuing
importance of distance-based graph matrices in modern spectral graph
theory~\cite{hogben2022}.

Several researchers have also focused on spectral properties of special graph
classes. Stevanovi\'c and Ili\'c provided a detailed study of the spectral
properties of distance matrices and their applications in molecular graph
theory~\cite{stevanovic2012}. The relationship between distance spectra,
transmission properties, and distance cospectrality has been explored in recent
studies, including the works of Aalipour \textit{et al.}~\cite{aalipour2016}
and Abiad \textit{et al.}~\cite{abiad2017}. In addition, distance spectra of
distance-regular graphs have been investigated by Atik and Panigrahi~\cite{atik2015},
providing further insight into the influence of graph structure on distance
eigenvalues. The classical work of Graham and Pollak~\cite{graham1971} also
established fundamental results related to distance matrices, motivating many
subsequent investigations in distance-based spectral theory.

Although distance matrices and their variants successfully capture global
structural information, they do not simultaneously incorporate local vertex
information such as vertex degrees and adjacency relationships. On the other
hand, adjacency and degree-based matrices mainly describe local structural
properties and may not fully represent the global metric characteristics of
graphs. Therefore, combining local and global structural information in a
single matrix provides a promising direction for developing new graph
invariants with richer spectral properties.

The integration of local and global structural information in a single graph matrix has the potential to provide more informative spectral descriptors for complex networks. Such hybrid matrices are useful in areas such as mathematical chemistry, network analysis, communication systems, and graph machine learning, where both connectivity and distance characteristics influence the underlying graph structure. Motivated by these applications, the development of new graph matrices that unify multiple structural features has become an important direction in spectral graph theory.

Recently, Leka Amruthavarshini and Rajkumar introduced the
adjacency--diametrical matrix (AD matrix), which combines adjacency
information and diametrical relationships of a graph~\cite{leka2026}. The AD
matrix provides a framework for studying the spectral properties of a graph by
incorporating both local adjacency and maximum distance information. Inspired
by this idea, we further explore a graph matrix that incorporates vertex degree
information along with adjacency and diametrical relationships, leading to the
introduction of the Degree--Adjacency--Diametrical (DAD) matrix.

We now introduce a new graph matrix that combines degree, adjacency, and
diametrical information. Let $G$ be a connected graph with vertex set
$V(G)=\{v_1,v_2,\ldots,v_n\}$ and diameter $\operatorname{diam}(G)$. The
\textit{Degree--Adjacency--Diametrical (DAD) matrix} of $G$, denoted by
$DAD(G)=[d_{ij}]_{n\times n}$, is defined as

\[
d_{ij}=
\begin{cases}
	\deg(v_i), & \text{if } i=j,\\[4pt]
	1, & \text{if } v_i \text{ and } v_j \text{ are adjacent},\\[4pt]
	\operatorname{diam}(G), & \text{if } d(v_i,v_j)=\operatorname{diam}(G),\\[4pt]
	0, & \text{otherwise}.
\end{cases}
\]

The proposed matrix integrates three fundamental structural features of a
graph: vertex degree, local adjacency, and maximum distance relationships.
Hence, the DAD matrix provides a unified framework that combines local
connectivity and global extremal distance information.

Throughout this paper, all graphs considered are finite, simple, and connected.
For a graph $G=(V,E)$, the degree of a vertex $v\in V(G)$ is denoted by
$d(v)$. The distance between two vertices $u$ and $v$, denoted by
$d(u,v)$, is the length of a shortest path connecting them. The diameter of
$G$, denoted by $\operatorname{diam}(G)$, is defined as

\[
\operatorname{diam}(G)=
\max\{d(u,v):u,v\in V(G)\}.
\]

The characteristic polynomial of a square matrix $M$ of order $n$ is defined by
\[
\phi(M,\lambda)=\det(\lambda I_n-M),
\]
where $I_n$ denotes the identity matrix of order $n$. The roots of
$\phi(M,\lambda)$ are called the \emph{eigenvalues} of $M$. The
\emph{spectrum} of $M$, denoted by $\operatorname{Spec}(M)$, is the multiset of
all eigenvalues of $M$.

If $\lambda_1,\lambda_2,\ldots,\lambda_n$ are the eigenvalues of $M$, then the
\emph{energy} of $M$ is defined as
\[
E(M)=\sum_{i=1}^{n}|\lambda_i|.
\]

Another important distance-based graph invariant is the
\emph{Wiener index} of a connected graph $G$, introduced by Graham and
Pollak~\cite{graham1971}, which is defined by
\[
W(G)=\sum_{\{u,v\}\subseteq V(G)} d(u,v),
\]
where the summation is taken over all unordered pairs of distinct vertices
$u,v\in V(G)$.

	\section{Main Results}
	\begin{thm}
		Let $P_n$ be the path graph on $n \ge 2$ vertices, and let $DAD(P_n)$ denote its Degree–Adjacency–Diametrical matrix. Then
		\[
		\Phi_{DAD(P_n)}(x) = 
		\begin{cases} 
			x(x-2), & n=2, \\[6pt] 
			x\Phi_{n-1}(x-2) - (n-1)^2\Phi_{n-2}(x-2) + 2(1-n), & n \ge 3, 
		\end{cases}
		\]
		where
		\[
		\Phi_n(x) = \phi_{A(P_n)}(x) = \prod_{k=1}^{n}\left(x-2\cos\frac{\pi k}{n+1}\right),
		\]
		and $\Phi_0(x)=1$.
	\end{thm}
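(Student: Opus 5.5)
The plan is to expand $\det(xI_n-DAD(P_n))$ as a tridiagonal determinant perturbed by two corner entries. The case $n=2$ is a direct check. Here $v_1\sim v_2$ and $\operatorname{diam}(P_2)=1$, so the matrix is $\begin{pmatrix}1&1\\1&1\end{pmatrix}$ (adjacency takes precedence), whose characteristic polynomial is $x(x-2)$.

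\emph{Setup for $n\ge 3$.} Label the path $v_1,\dots,v_n$. The diagonal entries are $d(v_1)=d(v_n)=1$ and $d(v_i)=2$ for $1<i<n$. The off-diagonal entries are $1$ for consecutive vertices, and $\operatorname{diam}(P_n)=n-1$ for the unique diametrical pair $\{v_1,v_n\}$; all other entries are $0$. Hence $xI_n-DAD(P_n)=T+C$. Here $T$ is tridiagonal with diagonal $(x-1,x-2,\dots,x-2,x-1)$ and off-diagonal entries $-1$. The corner part $C$ has entries $-(n-1)$ in positions $(1,n)$ and $(n,1)$ and zeros elsewhere.

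\emph{Corner expansion.} I will use the standard determinant formula for a ``periodic'' tridiagonal matrix, obtained by Laplace expansion along the first row and then the first column. The only permutations that use a corner entry are of three kinds:
\begin{itemize}
\item those using both corners, which contribute $-(n-1)^2$ times the determinant of the inner $(n-2)\times(n-2)$ block;
\item those using exactly one corner, which are forced to use the full off-diagonal chain and hence form an $n$-cycle;
\item the rest, which give $\det T$.
\end{itemize}
The inner block is tridiagonal with constant diagonal $x-2$ and off-diagonal $-1$, so its determinant is $\Phi_{n-2}(x-2)$, the characteristic polynomial of $A(P_{n-2})$ evaluated at $x-2$. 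Each of the two single-corner terms is $(-1)^{n+1}\cdot(-(n-1))\cdot(-1)^{n-1}$, and together they give $2(1-n)$. So
\[
\Phi_{DAD(P_n)}(x)=\det T-(n-1)^2\Phi_{n-2}(x-2)+2(1-n).
\]

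\emph{Identifying $\det T$.} It remains to show $\det T=x\,\Phi_{n-1}(x-2)$. The matrix $T$ is $xI_n-Q(P_n)$, where $Q=D+A$ is the signless Laplacian. Since $P_n$ is bipartite, $Q(P_n)$ is similar to the Laplacian $L(P_n)$ via the diagonal $\pm1$ signing. Its eigenvalues are therefore $2-2\cos(\pi k/n)$ for $k=0,\dots,n-1$. Replacing $k$ by $n-k$, the set of nonzero eigenvalues equals $\{2+2\cos(\pi k/n): 1\le k\le n-1\}$, which is exactly the root set of $\Phi_{n-1}(x-2)$. The eigenvalue $0$ contributes the factor $x$.

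A self-contained alternative is to expand $\det T$ along the first row, using the three-term recurrence $\Phi_m(y)=y\Phi_{m-1}(y)-\Phi_{m-2}(y)$. This yields $\det T=(x-1)\Psi_{n-1}-\Psi_{n-2}$, where $\Psi_m=\Phi_m(x-2)-\Phi_{m-1}(x-2)$ accounts for the modified last entry. The identity $(x-1)\Psi_{n-1}-\Psi_{n-2}=x\Phi_{n-1}(x-2)$ then follows by induction from the recurrence.

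Combining the two parts gives the stated formula. As a sanity check at $n=3$: the eigenvalues of $DAD(P_3)$ are $4,1,-1$, and the formula gives $x^3-4x^2-x+4$, consistent with this.

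\emph{Main obstacle.} I expect the main difficulty to be the sign bookkeeping in the corner expansion. One must confirm that the two single-corner cycles contribute the constant $2(1-n)$ with no dependence on $x$, and with the correct sign for both parities of $n$.
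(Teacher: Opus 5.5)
Your proof is correct, but it takes a different route from the paper.

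The paper reorders the vertices as $v_1,v_n,v_2,\dots,v_{n-1}$ and takes the Schur complement of the inner block $T=(x-2)I_{n-2}-A(P_{n-2})$. It uses the boundary entries $\Phi_{n-3}(x-2)$ and $1$ of $\operatorname{adj}(T)$. It then needs the Cassini-type Chebyshev identity $\Phi_{n-3}^2-\Phi_{n-4}\Phi_{n-2}=1$ and two applications of the three-term recurrence before the term $x\Phi_{n-1}(x-2)$ appears.

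Your permutation expansion produces the three summands at once and explains each one:
\begin{itemize}
\item the transposition of the two end vertices gives $-(n-1)^2\Phi_{n-2}(x-2)$;
\item the two Hamiltonian-cycle permutations give $-(n-1)$ each, for every parity of $n$, so your sign bookkeeping is right;
\item the corner-free part is $\det(xI_n-Q(P_n))=x\Phi_{n-1}(x-2)$, which you correctly read off from the Laplacian spectrum $2-2\cos(\pi k/n)$ of the bipartite path.
\end{itemize}
This buys three things. You never divide by $\Phi_{n-2}(x-2)$, whereas the paper's Schur step tacitly needs $T$ invertible and a polynomial-identity argument afterwards. You do not need the convention $\Phi_{-1}=0$, which the paper's Chebyshev and recurrence steps silently use at $n=3$. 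And the same computation shows that replacing the diametrical weight $n-1$ by any $c$ gives $x\Phi_{n-1}(x-2)-c^2\Phi_{n-2}(x-2)-2c$. The cost is that you import the Laplacian spectrum of $P_n$, whereas the paper stays within elementary tridiagonal algebra.

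There is one slip in your ``self-contained alternative''. Raising the last diagonal entry from $x-2$ to $x-1$ adds the leading $(m-1)\times(m-1)$ principal minor. So $\Psi_m=\Phi_m(x-2)+\Phi_{m-1}(x-2)$, not $\Phi_m(x-2)-\Phi_{m-1}(x-2)$; already $\Psi_1=x-1$, not $x-3$. With your sign, $(x-1)\Psi_{n-1}-\Psi_{n-2}$ simplifies to $(x-2)\Phi_{n-1}(x-2)-2\Phi_{n-2}(x-2)$, so the claimed identity would fail. With the plus sign and $y=x-2$, the recurrence gives directly $(y+1)(\Phi_{n-1}+\Phi_{n-2})-(\Phi_{n-2}+\Phi_{n-3})=(y+1)\Phi_{n-1}+(y\Phi_{n-2}-\Phi_{n-3})=(y+2)\Phi_{n-1}$, with no induction needed. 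Since your main argument through $Q(P_n)$ is complete, this does not affect the proof.
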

	
	\begin{proof}
		We consider two cases.
		
		\textbf{Case 1: $n = 2$} \\
		The graph $P_2$ consists of two adjacent vertices. Each vertex has degree $1$, and the diameter of the graph is also $1$. Hence
		\[
		DAD(P_2) = \begin{pmatrix} 1 & 1 \\ 1 & 1 \end{pmatrix}.
		\]
		Therefore
		\[
		\Phi_{DAD(P_2)}(x) = \det\begin{pmatrix} x-1 & -1 \\ -1 & x-1 \end{pmatrix} = (x-1)^2 - 1 = x^2 - 2x = x(x-2).
		\]
		Thus the stated formula holds for $n=2$.
		
		\vspace{1em}
		\textbf{Case 2: $n \ge 3$} \\
		For $n \ge 3$, the diameter of $P_n$ is $n-1$. Relabel the vertices as
		\[
		v_1, v_n, v_2, \ldots, v_{n-1}.
		\]
		Since the end vertices have degree $1$ and every internal vertex has degree $2$,
		\[
		DAD(P_n) = \begin{pmatrix} 
			1 & n-1 & 1 & \cdots & 0 & 0 \\ 
			n-1 & 1 & 0 & \cdots & 0 & 1 \\ 
			1 & 0 & 2 & \cdots & 0 & 0 \\ 
			\vdots & \vdots & \vdots & \ddots & \vdots & \vdots \\ 
			0 & 0 & 0 & \cdots & 2 & 1 \\ 
			0 & 1 & 0 & \cdots & 1 & 2 
		\end{pmatrix}.
		\]
		Hence
		\[
		\Phi_{DAD(P_n)}(x) = \det(xI_n - DAD(P_n)).
		\]
		Write
		\[
		xI_n - DAD(P_n) = \begin{pmatrix} B & C \\ C^T & T \end{pmatrix},
		\]
		where
		\[
		B = \begin{pmatrix} x-1 & 1-n \\ 1-n & x-1 \end{pmatrix},
		\]
		$C$ is the $2 \times (n-2)$ matrix having $-1$ in positions $(1,1)$ and $(2,n-2)$ and zeros elsewhere, and
		\[
		T = (x-2)I_{n-2} - A(P_{n-2}).
		\]
	By the Schur complement formula,
	\[
	\Phi_{DAD(P_n)}(x) = \det(T) \det\left(B - C T^{-1} C^T\right).
	\]
	Since $\det(T) = \Phi_{n-2}(x-2)$, we have:
	\[
	T^{-1} = \frac{\operatorname{adj}(T)}{\Phi_{n-2}(x-2)}.
	\]
	For the path matrix, the corner and boundary values of the adjugate matrix yield:
	\[
	(\operatorname{adj}(T))_{1,1} = (\operatorname{adj}(T))_{n-2, n-2} = \Phi_{n-3}(x-2),
	\]
	and
	\[
	(\operatorname{adj}(T))_{1, n-2} = (\operatorname{adj}(T))_{n-2, 1} = 1.
	\]
	Consequently,
	\[
	C T^{-1} C^T = \frac{1}{\Phi_{n-2}(x-2)} \begin{pmatrix} \Phi_{n-3}(x-2) & 1 \\ 1 & \Phi_{n-3}(x-2) \end{pmatrix}.
	\]
		Substituting (2.3) into (2.1) and using $\det\begin{pmatrix} a & b \\ b & a \end{pmatrix} = a^2 - b^2$, we obtain
		\begin{equation} \tag{2.4}
			\Phi_{DAD(P_n)}(x) = ((x-1)^2 - (n-1)^2)\Phi_{n-2}(x-2) - 2(x-1)\Phi_{n-3}(x-2) + \frac{\Phi_{n-3}^2(x-2) - 1}{\Phi_{n-2}(x-2)} + 2(1-n).
		\end{equation}
		The characteristic polynomial of a path satisfies
		\[
		\Phi_k(y) = U_k\!\left(\frac{y}{2}\right),
		\]
		where $U_k$ is the Chebyshev polynomial of the second kind. The identity
		\[
		U_k^2(t) - U_{k-1}(t)U_{k+1}(t) = 1
		\]
		implies
		\[
		\Phi_{n-3}^2(y) - \Phi_{n-4}(y)\Phi_{n-2}(y) = 1.
		\]
		Hence
		\[
		\frac{\Phi_{n-3}^2(y) - 1}{\Phi_{n-2}(y)} = \Phi_{n-4}(y).
		\]
		Setting $y = x-2$ in (2.4), we obtain
		\begin{equation} \tag{2.5}
			\Phi_{DAD(P_n)}(x) = ((x-1)^2 - (n-1)^2)\Phi_{n-2}(x-2) - 2(x-1)\Phi_{n-3}(x-2) + \Phi_{n-4}(x-2) + 2(1-n). 
		\end{equation}
		The polynomials $\Phi_k$ satisfy the recurrence relation
		\begin{equation} \tag{2.6}
			\Phi_k(y) = y\Phi_{k-1}(y) - \Phi_{k-2}(y). 
		\end{equation}
		Applying (2.6) with $k = n-2$ and $y = x-2$,
		\[
		\Phi_{n-4}(x-2) = (x-2)\Phi_{n-3}(x-2) - \Phi_{n-2}(x-2).
		\]
		Substituting this into (2.5) and collecting like terms yields
		\[
		\Phi_{DAD(P_n)}(x) = (x^2 - 2x - (n-1)^2)\Phi_{n-2}(x-2) - x\Phi_{n-3}(x-2) + 2(1-n).
		\]
		Since $x^2 - 2x = x(x-2)$,
		\[
		\Phi_{DAD(P_n)}(x) = x\Big((x-2)\Phi_{n-2}(x-2) - \Phi_{n-3}(x-2)\Big) - (n-1)^2\Phi_{n-2}(x-2) + 2(1-n).
		\]
		Using the recurrence relation once more,
		\[
		(x-2)\Phi_{n-2}(x-2) - \Phi_{n-3}(x-2) = \Phi_{n-1}(x-2).
		\]
		Therefore
		\[
		\Phi_{DAD(P_n)}(x) = x\Phi_{n-1}(x-2) - (n-1)^2\Phi_{n-2}(x-2) + 2(1-n)
		\]
		for all $n \ge 3$. Combining both cases completes the proof.
	\end{proof}

\begin{thm}
	For $n \geq 3$, let $\text{DAD}(C_n)$ be the Degree--Adjacency--Diametrical matrix of the cycle graph $C_n$. Then:
	
	\textbf{If $n$ is even:}
	\[ \text{Spec}(\text{DAD}(C_n)) = \left\{ 2 + 2\cos\frac{2\pi k}{n} + \frac{n}{2}(-1)^k :\; k=0,1,\ldots,n-1 \right\} \]
	and
	\[ \phi_{\text{DAD}(C_n)}(x) = \prod_{k=0}^{n-1} \left( x - 2 - 2\cos\frac{2\pi k}{n} - \frac{n}{2}(-1)^k \right) \]
	
	\textbf{If $n$ is odd:}
	\[ \text{Spec}(\text{DAD}(C_n)) = \left\{ 2 + 2\cos\frac{2\pi k}{n} + (n-1)\cos\frac{(n-1)\pi k}{n} :\; k=0,1,\ldots,n-1 \right\} \]
	and
	\[ \phi_{\text{DAD}(C_n)}(x) = \prod_{k=0}^{n-1} \left( x - 2 - 2\cos\frac{2\pi k}{n} - (n-1)\cos\frac{(n-1)\pi k}{n} \right) \]
\end{thm}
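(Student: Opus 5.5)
The plan is to observe that $DAD(C_n)$ is a circulant matrix and then read off its spectrum from the standard eigenvalue formula for circulants. Label the vertices $v_0,v_1,\ldots,v_{n-1}$ cyclically, so that $d(v_i,v_j)=\min\{|i-j|,\,n-|i-j|\}$. Every vertex has degree $2$, so the diagonal is constant and equal to $2$. Whether $v_i\sim v_j$, and whether $d(v_i,v_j)=\operatorname{diam}(C_n)$, depends only on $j-i \pmod n$. Hence $DAD(C_n)=\operatorname{circ}(c_0,c_1,\ldots,c_{n-1})$, and it has the Fourier eigenvectors $(1,\omega^k,\omega^{2k},\ldots)^T$ with eigenvalues $\lambda_k=\sum_{s=0}^{n-1}c_s\omega^{ks}$, where $\omega=e^{2\pi i/n}$ and $k=0,\ldots,n-1$. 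The characteristic polynomial is then $\prod_k(x-\lambda_k)$. This gives both claimed product formulas at once, with no determinant expansion needed, in contrast to the Schur-complement computation used for $P_n$ in the previous theorem.

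The key step is to identify the first row. We have $c_0=2$ and $c_1=c_{n-1}=1$, which contribute $2+\omega^k+\omega^{-k}=2+2\cos\frac{2\pi k}{n}$.

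\emph{Even $n$.} Here $\operatorname{diam}(C_n)=n/2$, and the only offset realizing it is $s=n/2$. So $c_{n/2}=n/2$, and it contributes $\frac n2\omega^{kn/2}=\frac n2(-1)^k$.

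\emph{Odd $n$.} Here $\operatorname{diam}(C_n)=\frac{n-1}{2}$, realized by the two offsets $s=\frac{n-1}{2}$ and $s=\frac{n+1}{2}$. Their contribution is
\[
\tfrac{n-1}{2}\bigl(\omega^{k(n-1)/2}+\omega^{-k(n-1)/2}\bigr)=(n-1)\cos\tfrac{(n-1)\pi k}{n},
\]
where I use $\omega^{k(n+1)/2}=\omega^{-k(n-1)/2}$. All other $c_s$ are $0$. Summing these contributions gives exactly the stated eigenvalues. The eigenvalues are real because the matrix is symmetric ($c_s=c_{n-s}$).

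The main obstacle is not analytic but definitional: making sure the adjacency offsets $\pm1$ and the diametrical offsets do not coincide. For $n\ge4$ they are distinct, since $\lfloor n/2\rfloor\ge2$, so the argument above goes through verbatim. For $n=3$, however, $\operatorname{diam}(C_3)=1$, so the diametrical pairs are exactly the adjacent pairs. The definition then assigns the value $1$ once, and the odd-case formula would double count. Indeed, the true spectrum of $DAD(C_3)=J+I$ is $\{4,1,1\}$, whereas the formula gives $2+4\cos\frac{2\pi k}{3}=\{6,0,0\}$. In writing the proof I would therefore treat $n=3$ separately, or restrict the odd case to $n\ge5$, and note this boundary case explicitly. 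Otherwise the proof is a direct application of the circulant diagonalization.
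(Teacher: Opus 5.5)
Your proposal is correct and matches the paper's proof: both treat $DAD(C_n)$ as a circulant with first row $(2,1,0,\ldots,0,\frac{n}{2},0,\ldots,0,1)$ for even $n$ or $(2,1,0,\ldots,0,m,m,0,\ldots,0,1)$ for $n=2m+1$, and both evaluate $\lambda_k=\sum_s c_s\omega^{ks}$. The paper writes the odd diametrical term as $2m(-1)^k\cos\frac{\pi k}{n}$, which equals your $(n-1)\cos\frac{(n-1)\pi k}{n}$.

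Your $n=3$ caveat is a genuine correction that the paper's proof misses. When $m=1$, the diametrical offsets $m,m+1$ coincide with the adjacency offsets $\pm1$, so the paper's first row double counts them. The theorem as stated therefore fails at $n=3$: the true spectrum is $\{4,1,1\}$, consistent with the paper's own $K_3$ result, while the formula gives $\{6,0,0\}$. The odd case should indeed be restricted to $n\ge 5$.
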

\begin{proof}
	
	Every vertex of $C_n$ has degree 2. Moreover,
	\[ \text{diam}(C_n) = \begin{cases} \dfrac{n}{2}, & \text{if } n \text{ is even}, \\[2mm] \dfrac{n-1}{2}, & \text{if } n \text{ is odd}. \end{cases} \]
	Hence, $\text{DAD}(C_n)$ is a circulant matrix. Let $\omega_n = e^{2\pi i/n}$ be a primitive $n$-th root of unity. It is well known that if a circulant matrix has the first row $(c_0, c_1, \ldots, c_{n-1})$, then its eigenvalues are:
	\[ \lambda_k = \sum_{j=0}^{n-1} c_j \omega_n^{jk}, \quad k=0, 1, \ldots, n-1. \tag{2.7} \]
	
	\textbf{Case 1: $n = 2m$ (even)} \\
	The first row of $\text{DAD}(C_n)$ is $(2, 1, 0, \ldots, 0, m, 0, \ldots, 0, 1)$. Using (2.1):
	\[ \lambda_k = 2 + \omega_n^k + \omega_n^{-k} + m\omega_n^{mk} \]
	Since $\omega_n^k + \omega_n^{-k} = 2\cos\frac{2\pi k}{n}$ and $\omega_n^{mk} = e^{2\pi i mk/n} = e^{\pi i k} = (-1)^k$, we obtain:
	\[ \lambda_k = 2 + 2\cos\frac{2\pi k}{n} + \frac{n}{2}(-1)^k \]
	Therefore,
	\[ \phi_{\text{DAD}(C_n)}(x) = \prod_{k=0}^{n-1} \left( x - 2 - 2\cos\frac{2\pi k}{n} - \frac{n}{2}(-1)^k \right) \]
	
	\textbf{Case 2: $n = 2m+1$ (odd)} \\
	The first row of $\text{DAD}(C_n)$ is $(2, 1, 0, \ldots, 0, m, m, 0, \ldots, 0, 1)$. Again using (2.1):
	\[ \lambda_k = 2 + \omega_n^k + \omega_n^{-k} + m\omega_n^{mk} + m\omega_n^{(m+1)k} \]
	Since $\omega_n^k + \omega_n^{-k} = 2\cos\frac{2\pi k}{n}$ and $\omega_n^{mk} + \omega_n^{(m+1)k} = 2(-1)^k\cos\frac{\pi k}{n}$, we obtain:
	\[ \lambda_k = 2 + 2\cos\frac{2\pi k}{n} + 2m(-1)^k\cos\frac{\pi k}{n} \]
	Because $n = 2m+1$, we have $2m = n-1$. Using $(-1)^k\cos\frac{\pi k}{n} = \cos\frac{(n-1)\pi k}{n}$, it follows that:
	\[ \lambda_k = 2 + 2\cos\frac{2\pi k}{n} + (n-1)\cos\frac{(n-1)\pi k}{n} \]
	Hence,
	\[ \phi_{\text{DAD}(C_n)}(x) = \prod_{k=0}^{n-1} \left( x - 2 - 2\cos\frac{2\pi k}{n} - (n-1)\cos\frac{(n-1)\pi k}{n} \right) \]
	Combining the two cases completes the proof. $\blacksquare$
\end{proof}

	\begin{thm}
		Let $K_n$ ($n \ge 2$) be the complete graph on $n$ vertices, and let $DAD(K_n)$ denote its Degree–Adjacency–Diametrical matrix. Then the characteristic polynomial of $DAD(K_n)$ is
		\[
		\Phi_{DAD(K_n)}(x) = (x-2n+2)(x-n+2)^{n-1}.
		\]
		Consequently, the spectrum of $DAD(K_n)$ is
		\[
		\operatorname{Spec}(DAD(K_n)) = \{\,2n-2,\, (n-2)^{(n-1)}\,\},
		\]
		where $(n-2)^{(n-1)}$ denotes the eigenvalue $n-2$ with multiplicity $n-1$.
	\end{thm}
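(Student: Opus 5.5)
The plan is to write $DAD(K_n)$ in closed form as a combination of the identity matrix $I_n$ and the all-ones matrix $J_n$, and then read off the spectrum from the well-known spectrum of $J_n$.

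\textbf{Step 1: identify the entries.} In $K_n$ every vertex has degree $n-1$, so every diagonal entry of $DAD(K_n)$ equals $n-1$. Any two distinct vertices are adjacent, so $d(v_i,v_j)=1$ for $i\neq j$, and hence $\operatorname{diam}(K_n)=1$. In the definition of the DAD matrix, the off-diagonal clauses ``$v_i\sim v_j$'' and ``$d(v_i,v_j)=\operatorname{diam}(G)$'' therefore apply to the same pairs. Both clauses assign the value $1$, so there is no conflict. The one point to state explicitly is that the two overlapping clauses agree here, which is the only place the definition could cause concern. It follows that
\[
DAD(K_n)=(n-1)I_n+(J_n-I_n)=(n-2)I_n+J_n .
\]
For $n=2$ this gives the matrix $\begin{pmatrix}1&1\\1&1\end{pmatrix}$, consistent with the case $n=2$ of the path theorem, since $P_2=K_2$.

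\textbf{Step 2: compute the spectrum.} The matrix $J_n$ has rank one. The all-ones vector $\mathbf{1}$ is an eigenvector with eigenvalue $n$, since $J_n\mathbf{1}=n\mathbf{1}$. Every vector orthogonal to $\mathbf{1}$ lies in the kernel of $J_n$, which gives the eigenvalue $0$ with multiplicity $n-1$. Since $(n-2)I_n$ commutes with $J_n$, adding it shifts every eigenvalue by $n-2$ and keeps the same eigenvectors. This yields the eigenvalue $2n-2$ with multiplicity one and the eigenvalue $n-2$ with multiplicity $n-1$.

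\textbf{Step 3: write the characteristic polynomial.} From the spectrum,
\[
\Phi_{DAD(K_n)}(x)=\det\big((x-n+2)I_n-J_n\big)=(x-2n+2)(x-n+2)^{n-1}.
\]
As an alternative check, one can use the determinant lemma $\det(aI_n-J_n)=a^{n-1}(a-n)$ with $a=x-n+2$.

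None of these steps is a real obstacle; the argument is a short direct computation.
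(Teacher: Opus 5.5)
Your proposal is correct and follows essentially the same route as the paper: you write $DAD(K_n)=(n-2)I_n+J_n$ and read off the spectrum by shifting the eigenvalues $n$ and $0$ (with multiplicity $n-1$) of $J_n$. Your explicit remark that the adjacency and diametrical clauses coincide (both give $1$, since $\operatorname{diam}(K_n)=1$) and your determinant-lemma check are small but useful additions.
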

	
	\begin{proof}
		Since $K_n$ is a complete graph, every vertex has degree $n-1$, and the diameter of $K_n$ is $1$. By the definition of the Degree–Adjacency–Diametrical matrix, the diagonal entries are the vertex degrees, while every off-diagonal entry is $1$ because every pair of distinct vertices is adjacent (and also at distance equal to the diameter).
		
		Hence
		\[
		DAD(K_n) = \begin{pmatrix}
			n-1 & 1 & 1 & \cdots & 1 \\
			1 & n-1 & 1 & \cdots & 1 \\
			1 & 1 & n-1 & \cdots & 1 \\
			\vdots & \vdots & \vdots & \ddots & \vdots \\
			1 & 1 & 1 & \cdots & n-1
		\end{pmatrix}.
		\]
		
		Let $J_n$ denote the $n \times n$ all-ones matrix. Then
		\[
		DAD(K_n) = (n-2)I_n + J_n.
		\]
		
		Therefore,
		\begin{equation} \tag{2.8}
			xI_n - DAD(K_n) = (x-n+2)I_n - J_n.
		\end{equation}
		
		It is well known that the matrix $J_n$ has eigenvalue $n$ corresponding to the vector
		\[
		\mathbf{1} = (1,1,\ldots,1)^T,
		\]
		and eigenvalue $0$ with multiplicity $n-1$ on the subspace
		\[
		\{\mathbf{x} \in \mathbb{R}^n : \mathbf{1}^T\mathbf{x} = 0\}.
		\]
		
		Consequently, the matrix $DAD(K_n) = (n-2)I_n + J_n$ has eigenvalues
		\[
		(n-2) + n = 2n-2
		\]
		with multiplicity one, and
		\[
		(n-2) + 0 = n-2
		\]
		with multiplicity $n-1$.
		
		Since the characteristic polynomial is the product of $x-\lambda$ over all eigenvalues $\lambda$, we obtain
		\[
		\Phi_{DAD(K_n)}(x) = \det(xI_n - DAD(K_n)) = (x-(2n-2))(x-(n-2))^{n-1}.
		\]
		
		Equivalently,
		\[
		\Phi_{DAD(K_n)}(x) = (x-2n+2)(x-n+2)^{n-1}.
		\]
		
		Hence
		\[
		\operatorname{Spec}(DAD(K_n)) = \{\,2n-2,\, (n-2)^{(n-1)}\,\}.
		\]
		
		This completes the proof.
	\end{proof}

	\begin{thm}[Characteristic Polynomial and Spectrum of $DAD(K_{n_1,\ldots,n_m})$]
		Let $G = K_{n_1,n_2,\ldots,n_m}$ be the complete $m$-partite graph, where $m \ge 2$, $n_i \ge 1$ for all $i \in \{1, \ldots, m\}$, and $n = n_1 + n_2 + \cdots + n_m$. Then the characteristic polynomial of the Degree–Adjacency–Diametrical matrix $DAD(G)$ is
		\[
		\Phi_{DAD(G)}(x) = (x - 2n + 2)(x - n + 2)^{m-1} \prod_{i=1}^m (x - n + n_i + 2)^{n_i-1}.
		\]
		Consequently, the spectrum of $DAD(G)$ consists of the following eigenvalues:
		\begin{itemize}
			\item $n - n_i - 2$ with multiplicity $n_i - 1$ for each $i \in \{1, \ldots, m\}$,
			\item $n - 2$ with multiplicity $m - 1$,
			\item $2n - 2$ with multiplicity $1$.
		\end{itemize}
	\end{thm}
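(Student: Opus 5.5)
We will exhibit $DAD(G)$ explicitly as a block matrix and diagonalize it using two families of test vectors: vectors orthogonal to $\mathbf{1}$ inside a single part, and vectors constant on each part.

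\textbf{Step 1: diameter.} If $n_i=1$ for all $i$, then $G=K_n$. The claimed polynomial reduces to $(x-2n+2)(x-n+2)^{n-1}$, which is the theorem on $DAD(K_n)$ proved above. Otherwise some part has at least two vertices. Two vertices in the same part are nonadjacent but have a common neighbour in any other part, since $m\ge 2$. Hence $\operatorname{diam}(G)=2$.

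\textbf{Step 2: block form.} Order the vertices part by part, and let $V_i$ be the $i$-th part. A vertex of $V_i$ has degree $n-n_i$. Distinct vertices of the same part are at distance $2=\operatorname{diam}(G)$, so the corresponding entry is $2$. Vertices in different parts are adjacent, so the entry is $1$. Thus the $(i,i)$ diagonal block is
\[
(n-n_i)I_{n_i}+2(J_{n_i}-I_{n_i})=(n-n_i-2)I_{n_i}+2J_{n_i},
\]
and every off-diagonal $(i,j)$ block is $J_{n_i\times n_j}$. This formula also covers parts with $n_i=1$.

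\textbf{Step 3: within-part eigenvectors.} Take a vector supported on $V_i$ whose entries sum to $0$. It is annihilated by $J_{n_i}$ and by every all-ones off-diagonal block. Hence it is an eigenvector with eigenvalue $n-n_i-2$. These vectors span a space of dimension $n_i-1$ for each $i$, giving the factors $(x-n+n_i+2)^{n_i-1}$. Together they span $\sum_i(n_i-1)=n-m$ dimensions.

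\textbf{Step 4: quotient matrix.} The orthogonal complement of the Step 3 space is the $m$-dimensional space of vectors constant on each part. It is invariant under $DAD(G)$, and the action is given by the quotient matrix $Q$ with
\[
Q_{ii}=(n-n_i-2)+2n_i=n+n_i-2,\qquad Q_{ij}=n_j\quad (i\ne j).
\]
Equivalently, $Q=(n-2)I_m+\mathbf{1}\,\mathbf{n}^T$, where $\mathbf{n}=(n_1,\ldots,n_m)^T$. The matrix $\mathbf{1}\,\mathbf{n}^T$ has rank one, with eigenvalue $\mathbf{n}^T\mathbf{1}=n$ and eigenvalue $0$ of multiplicity $m-1$. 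Therefore $Q$ has eigenvalues $2n-2$ (once) and $n-2$ ($m-1$ times).

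\textbf{Step 5: conclusion.} The eigenvalues from Steps 3 and 4 account for all $n$ dimensions, so they give the full characteristic polynomial and spectrum stated.

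The only step needing care is Step 4. One must check that $Q$ really represents $DAD(G)$ on block-constant vectors: with $\mathbf{y}=(y_1,\ldots,y_m)^T$, the $i$-th block of $DAD(G)\mathbf{y}$ is $(n+n_i-2)y_i+\sum_{j\ne i}n_jy_j$. One should also note that $Q$ is not symmetric, but its spectrum follows directly from the rank-one structure.
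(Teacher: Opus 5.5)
Your proposal is correct and follows essentially the same route as the paper: the same block form, the same zero-sum within-part eigenvectors, and the same quotient matrix $Q$. The only difference is cosmetic: the paper evaluates $\det(xI_m-Q)$ via the matrix determinant lemma, while you read off the spectrum of $(n-2)I_m+\mathbf{1}\mathbf{n}^T$ directly.

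Your separate handling of the case $n_1=\cdots=n_m=1$ (where $G=K_n$ has diameter $1$) is slightly more careful than the paper, whose Step~1 asserts $\operatorname{diam}(G)=2$ unconditionally. The block matrix, and hence the result, is unaffected in that case.
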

	
	\begin{proof}
	Step 1:
		Let $V_1, V_2, \ldots, V_m$ be the partite sets of $G$, where $|V_i| = n_i$. By the definition of a complete multipartite graph:
		\begin{itemize}
			\item Each vertex in $V_i$ is adjacent to all vertices belonging to $V_j$ for all $j \ne i$.
			\item No two distinct vertices within the same partite set $V_i$ are adjacent.
		\end{itemize}
		Hence, every vertex in $V_i$ has degree $d(v) = n - n_i$. Furthermore, since $m \ge 2$, any two distinct vertices in the same part share common neighbors in the other parts, meaning their distance is exactly $2$. Thus, $\operatorname{diam}(G) = 2$.
		
		The matrix $DAD(G)$ can be partitioned into blocks based on the vertex sets $V_1, \ldots, V_m$. The diagonal block corresponding to $V_i$ is given by
		\[
		(n - n_i)I_{n_i} + 2(J_{n_i} - I_{n_i}) = (n - n_i - 2)I_{n_i} + 2J_{n_i},
		\]
		where $I_{n_i}$ is the identity matrix and $J_{n_i}$ is the all-ones matrix of order $n_i$. Every off-diagonal block representing the adjacencies between $V_i$ and $V_j$ ($i \ne j$) is simply the all-ones matrix $J_{n_i \times n_j}$.

	Step 2: For each $i \in \{1, \dots, m\}$, consider the subspace
	\[
	W_i = \left\{ x \in \mathbb{R}^n : \operatorname{supp}(x) \subseteq V_i \text{ and } \sum_{v \in V_i} x_v = 0 \right\}.
	\]
	The dimension of this subspace is $\dim(W_i) = n_i - 1$.
		The dimension of this subspace is $\dim(W_i) = n_i - 1$. For any vector $x \in W_i$, we have $J_{n_i}x = 0$. Since $x$ is zero outside of $V_i$, all off-diagonal blocks of $DAD(G)$ annihilate $x$ as well. Multiplying $DAD(G)$ by $x$ yields
		\[
		DAD(G)x = \left((n - n_i - 2)I_{n_i} + 2J_{n_i}\right)x = (n - n_i - 2)x.
		\]
		Thus, $n - n_i - 2$ is an eigenvalue of $DAD(G)$ with multiplicity $n_i - 1$ for each part. This contributes the structural factor
		\[
		\prod_{i=1}^m (x - n + n_i + 2)^{n_i-1}
		\]
		to the characteristic polynomial.
		
	Step 3:
		The remaining eigenvalues correspond to eigenvectors that are constant on each partite set. Let $x = (c_1 \mathbf{1}_{n_1}^T, c_2 \mathbf{1}_{n_2}^T, \ldots, c_m \mathbf{1}_{m}^T)^T$, where $\mathbf{1}_{n_i}$ denotes the all-ones column vector of length $n_i$. For any vertex $v \in V_i$, the corresponding row execution yields
		\[
		(DAD(G)x)_v = (n - n_i - 2)c_i + 2n_i c_i + \sum_{j \ne i} n_j c_j = (n + n_i - 2)c_i + \sum_{j \ne i} n_j c_j.
		\]
		The induced quotient matrix $Q$ acting on this $m$-dimensional subspace is
		\[
		Q = \begin{pmatrix}
			n + n_1 - 2 & n_2 & \cdots & n_m \\
			n_1 & n + n_2 - 2 & \cdots & n_m \\
			\vdots & \vdots & \ddots & \vdots \\
			n_1 & n_2 & \cdots & n + n_m - 2
		\end{pmatrix}.
		\]
		The remaining $m$ eigenvalues of $DAD(G)$ are precisely the eigenvalues of $Q$.
		
Step 4: 
		We can express the characteristic matrix $xI_m - Q$ as a rank-one perturbation of a scalar matrix:
		\[
		xI_m - Q = (x - n + 2)I_m - \mathbf{1}_m r^T,
		\]
		where $\mathbf{1}_m = (1, 1, \ldots, 1)^T$ and $r = (n_1, n_2, \ldots, n_m)^T$. 
		Applying the matrix determinant lemma, $\det(D - uv^T) = \det(D)(1 - v^T D^{-1} u)$, with $D = (x - n + 2)I_m$, $u = \mathbf{1}_m$, and $v = r$, we obtain
		\[
		\det(xI_m - Q) = \det\left((x - n + 2)I_m\right) \left( 1 - r^T \frac{1}{x - n + 2}I_m \mathbf{1}_m \right).
		\]
		Evaluating the determinant and the inner product gives
		\[
		\det(xI_m - Q) = (x - n + 2)^m \left( 1 - \frac{1}{x - n + 2} \sum_{i=1}^m n_i \right).
		\]
		Since $\sum_{i=1}^m n_i = n$, this simplifies directly to
		\[
		\det(xI_m - Q) = (x - n + 2)^m \left( \frac{x - 2n + 2}{x - n + 2} \right) = (x - n + 2)^{m-1}(x - 2n + 2).
		\]
		
	Step 5:
		Combining the large multiplicity factors obtained in Step 2 with the determinant of the quotient matrix computed in Step 4, the complete characteristic polynomial is
		\[
		\Phi_{DAD(G)}(x) = (x - 2n + 2)(x - n + 2)^{m-1} \prod_{i=1}^m (x - n + n_i + 2)^{n_i-1}.
		\]
		This completes the proof.
	\end{proof}

\begin{cor}
	Let $K_{m,n}$, where $m,n\geq 2$, be a complete bipartite graph.
	Then the characteristic polynomial of its Degree--Adjacency--Diametrical
	matrix is
	
	\[
	\Phi_{DAD(K_{m,n})}(x)
	=
	(x-2m-2n+2)(x-m-n+2)
	(x-n+2)^{m-1}(x-m+2)^{n-1}.
	\]
	
	Consequently,
	
	\[
	\operatorname{Spec}(DAD(K_{m,n}))
	=
	\{(2m+2n-2)^1,(m+n-2)^1,
	(n-2)^{m-1},(m-2)^{n-1}\}.
	\]
	
\end{cor}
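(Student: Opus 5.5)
The corollary is a direct specialization of the preceding theorem on complete multipartite graphs, so I would prove it by substitution rather than by a new computation. The only subtlety is notation: in the theorem $m$ denotes the number of parts and $n$ the total order. Here $m$ and $n$ are the two part sizes. I would therefore first rename. Apply the theorem with two parts, $n_1=m$ and $n_2=n$, and total order $N=m+n$.

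Before substituting, I would check that the hypotheses of the theorem genuinely hold, in particular that $\operatorname{diam}(K_{m,n})=2$. This is where the assumption $m,n\ge 2$ enters. Each part contains two distinct non-adjacent vertices, and these have a common neighbour in the other part, so their distance is exactly $2$. Every cross pair is adjacent. Hence the diameter is $2$, and the block description in Step~1 of the theorem applies.

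The diagonal blocks are $(n-2)I_m+2J_m$ and $(m-2)I_n+2J_n$, and the off-diagonal blocks are all-ones matrices. By contrast, for $K_{1,1}=K_2$ the diameter would be $1$ and the block formula would fail. I would record this in a brief remark; it is the only point that needs any care.

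Next I would substitute into
\[
(x-2N+2)(x-N+2)^{1}(x-N+n_1+2)^{n_1-1}(x-N+n_2+2)^{n_2-1}.
\]
Since $N-n_1=n$ and $N-n_2=m$, this becomes
\[
(x-2m-2n+2)(x-m-n+2)(x-n+2)^{m-1}(x-m+2)^{n-1},
\]
which is exactly the claimed polynomial.

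Finally, I would read off the spectrum from the linear factors. The eigenvalues are $2m+2n-2$ and $m+n-2$, each simple, together with $n-2$ of multiplicity $m-1$ and $m-2$ of multiplicity $n-1$. As a sanity check, the multiplicities sum to $m+n$.
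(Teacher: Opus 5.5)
Your proposal is correct and follows the paper's route. The paper gives no separate proof: the corollary is the two-part case of the complete multipartite theorem with $n_1=m$, $n_2=n$, $N=m+n$, and your substitution and reading-off of the spectrum are right.

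One side remark is inaccurate: the block formula does not fail for $K_{1,1}$. Singleton parts have no within-part pairs, so $2(J_1-I_1)=0$, and the formula correctly returns $x(x-2)$. Hence the hypothesis $m,n\ge 2$ is not actually needed for the specialization.
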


	\begin{cor}
	Let $S_n = K_{1,n-1}$ ($n \ge 2$) be the star graph on $n$ vertices, and let $DAD(S_n)$ denote its Degree–Adjacency–Diametrical matrix. Then
	\[
	\Phi_{DAD(S_n)}(x) = 
	\begin{cases}
		x(x-2), & n=2, \\[6pt]
		(x+1)^{n-2}(x-(2n-2))(x-(n-2)), & n \ge 3.
	\end{cases}
	\]
	Consequently,
	\[
	\operatorname{Spec}(DAD(S_n)) = 
	\begin{cases}
		\{\,2, 0\,\}, & n=2, \\[6pt]
		\{\,2n-2,\; n-2,\; (-1)^{(n-2)}\,\}, & n \ge 3,
	\end{cases}
	\]
	where $(-1)^{(n-2)}$ denotes the eigenvalue $-1$ with multiplicity $n-2$.
\end{cor}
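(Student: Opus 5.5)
The plan is to read the star off the two general results already proved, treating $n=2$ separately because the diameter changes there.

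For $n\ge 3$ we view $S_n=K_{1,n-1}$ as a complete $m$-partite graph with $m=2$, $n_1=1$ and $n_2=n-1$. Before invoking the theorem on $DAD(K_{n_1,\ldots,n_m})$, we check its hidden hypothesis $\operatorname{diam}(G)=2$. That argument needs some part to contain two distinct vertices. Here the part $V_2$ of leaves has $n-1\ge 2$ vertices, and any two leaves are at distance $2$ through the centre, so $\operatorname{diam}(S_n)=2$. Hence the block description of $DAD(G)$ used in that proof applies verbatim.

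Substituting into
\[
\Phi_{DAD(G)}(x)=(x-2n+2)(x-n+2)^{m-1}\prod_{i=1}^m (x-n+n_i+2)^{n_i-1}
\]
gives the following factors:
\begin{itemize}
\item the factor $(x-2n+2)$ is unchanged;
\item $(x-n+2)^{1}$ comes from $m-1=1$;
\item the part $V_1$ contributes $(x-n+3)^{0}=1$;
\item the part $V_2$ contributes $(x-n+(n-1)+2)^{n-2}=(x+1)^{n-2}$.
\end{itemize}
Thus $\Phi_{DAD(S_n)}(x)=(x+1)^{n-2}(x-(2n-2))(x-(n-2))$. The spectrum is read off from the roots: $2n-2$ and $n-2$ once each, and $-1=n-n_2-2$ with multiplicity $n-2$.

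For $n=2$ we have $S_2=K_2$, whose diameter is $1$, not $2$, so the multipartite theorem does not apply. Instead we use the theorem for $K_n$ with $n=2$, which gives $(x-2)x^{1}=x(x-2)$ and spectrum $\{2,0\}$. This also agrees with Case 1 of the path theorem, since $S_2=P_2$.

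The main point of care is exactly this diameter issue: the multipartite formula silently requires a part of size at least $2$. So the case split at $n=2$ is essential rather than cosmetic. Everything else is direct substitution.
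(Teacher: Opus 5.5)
Your proof is correct and matches the paper's intended route: the paper gives no separate proof, and the corollary is meant as the case $m=2$, $n_1=1$, $n_2=n-1$ of the complete multipartite theorem, with $n=2$ covered by $K_2=P_2$. One overstatement: the split at $n=2$ is not essential, since the $n\ge 3$ formula already gives $(x+1)^0(x-2)x=x(x-2)$ at $n=2$, and the multipartite block description remains valid for $K_{1,1}$ (the diameter enters only through within-part entries, and there are none when both parts are singletons).
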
	

	\begin{thm}
	Let $G$ be a connected graph of order $n$ with diameter $2$. Let $\mu_1, \mu_2, \ldots, \mu_n$ be the eigenvalues of the Degree–Adjacency–Diametrical matrix $DAD(G)$. Then
	\[
	W(G) = \frac{1}{6}\left(2n(n-1) - \sum_{v \in V(G)} d(v)^2 + \sum_{i=1}^{n} \mu_i^2 \right).
	\]
\end{thm}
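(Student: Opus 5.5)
The plan is to compute $\operatorname{tr}\big(DAD(G)^2\big)$ in two ways: spectrally, and entrywise from the definition. Then eliminate the number of edges using the diameter-$2$ hypothesis. Since $DAD(G)$ is a real symmetric matrix, its eigenvalues $\mu_1,\ldots,\mu_n$ are real and
\[
\sum_{i=1}^n \mu_i^2=\operatorname{tr}\big(DAD(G)^2\big)=\sum_{i,j} d_{ij}^2 ,
\]
so everything reduces to counting the squared entries.

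Let $|E(G)|=e$, and let $p$ be the number of unordered pairs of vertices at distance $2$. Because $\operatorname{diam}(G)=2$, every pair of distinct vertices is either adjacent or at distance exactly $2$, and the two cases are exclusive. Hence the off-diagonal entries are $1$ on the $2e$ ordered adjacent pairs and $2$ on the $2p$ ordered distance-$2$ pairs, with no zero off-diagonal entries. The diagonal entries contribute $\sum_v d(v)^2$. Therefore
\[
\sum_{i=1}^n\mu_i^2=\sum_{v} d(v)^2+2e+8p .
\]

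Next, express $e$ and $p$ through $W(G)$. The pairs split as $e+p=\binom{n}{2}$, and $W(G)=e+2p$ since distances are only $1$ or $2$. This gives $p=W(G)-\frac{n(n-1)}{2}$ and $e=n(n-1)-W(G)$. Substituting, $2e+8p=2n(n-1)-2W(G)+8W(G)-4n(n-1)=6W(G)-2n(n-1)$, so
\[
\sum_{i=1}^n\mu_i^2=\sum_v d(v)^2+6W(G)-2n(n-1).
\]
Solving for $W(G)$ yields the stated formula.

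The only delicate point is the bookkeeping in the middle step. One must check that the definition of $DAD(G)$ is unambiguous here: when $\operatorname{diam}(G)=2$, the ``adjacent'' and ``diametrical'' cases never overlap, unlike for $K_n$. One must also count ordered rather than unordered pairs in the trace, so that the factors $2e$ and $8p$ come out correctly. Beyond this, the argument is a short linear elimination.
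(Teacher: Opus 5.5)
Your proposal is correct and follows essentially the same route as the paper: both compute $\operatorname{tr}(DAD(G)^2)$ by summing the squared entries, using that every off-diagonal entry is $1$ or $2$ when $\operatorname{diam}(G)=2$, and then use $W(G)=e+2p$ with $e+p=\binom{n}{2}$. The only difference is cosmetic: the paper first solves for the edge count and substitutes it into $W(G)=n(n-1)-m$, whereas you substitute $e$ and $p$ in terms of $W(G)$ directly into the trace identity.
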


\begin{proof}
	Let $M = DAD(G) = (m_{ij})$. Since $M$ is a real symmetric matrix,
	\[
	\sum_{i=1}^{n} \mu_i^2 = \operatorname{tr}(M^2) = \sum_{i=1}^{n} \sum_{j=1}^{n} m_{ij}^2.
	\]
	Because $\operatorname{diam}(G) = 2$, the entries of $M$ are given by
	\[
	m_{ij} = 
	\begin{cases} 
		d(v_i), & i = j, \\ 
		1, & v_i \sim v_j, \\ 
		2, & v_i \nsim v_j, \ i \ne j. 
	\end{cases}
	\]
	Hence,
	\begin{itemize}
		\item the diagonal entries contribute $\sum_{v \in V(G)} d(v)^2$;
		\item there are $2m$ ordered adjacent pairs, each contributing $1^2 = 1$, so the total contribution is $2m$;
		\item there are $2\left(\binom{n}{2} - m\right)$ ordered nonadjacent pairs, each contributing $2^2 = 4$, so the total contribution is $8\left(\binom{n}{2} - m\right)$.
	\end{itemize}
	Therefore,
	\[
	\sum_{i=1}^{n} \mu_i^2 = \sum_{v \in V(G)} d(v)^2 + 2m + 8\left(\binom{n}{2} - m\right).
	\]
	Since $2\binom{n}{2} = n(n-1)$, we obtain
	\[
	\sum_{i=1}^{n} \mu_i^2 = \sum_{v \in V(G)} d(v)^2 + 4n(n-1) - 6m.
	\]
	Rearranging gives
	\[
	m = \frac{1}{6}\left(4n(n-1) + \sum_{v \in V(G)} d(v)^2 - \sum_{i=1}^{n} \mu_i^2 \right).
	\]
	Now, since $\operatorname{diam}(G) = 2$, every unordered pair of vertices is either adjacent or at distance $2$. Hence, the Wiener index $W(G)$ is
	\[
	W(G) = m + 2\left(\binom{n}{2} - m\right) = n(n-1) - m.
	\]
	Substituting the above expression for $m$, we obtain
	\[
	W(G) = n(n-1) - \frac{1}{6}\left(4n(n-1) + \sum_{v \in V(G)} d(v)^2 - \sum_{i=1}^{n} \mu_i^2 \right).
	\]
	Simplifying,
	\[
	W(G) = \frac{1}{6}\left(2n(n-1) - \sum_{v \in V(G)} d(v)^2 + \sum_{i=1}^{n} \mu_i^2 \right).
	\]
	This completes the proof.
\end{proof}
	
	\begin{thm}
		Let $G$ be a connected graph with $n$ vertices, $m$ edges, and diameter $D\geq 2$. Let $\mu_1, \mu_2, \ldots, \mu_n$ be the eigenvalues of the Degree–Adjacency–Diametrical matrix $DAD(G)$. Define
		\[
		\hat{d}(G) = \left|\{(u,v) \in V(G)^2 : u \neq v, \, d(u,v) = D\}\right|,
		\]
		that is, $\hat{d}(G)$ is the number of ordered pairs of vertices at distance $D$. Then
		\begin{align*}
			\sum_{i=1}^{n}\mu_i &= 2m, \\[1ex]
			\sum_{i=1}^{n}\mu_i^2 &= \sum_{j=1}^{n}d(v_j)^2 + 2m + D^2\hat{d}(G).
		\end{align*}
	\end{thm}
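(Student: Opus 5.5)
The plan is to read both identities off the trace of $M = DAD(G) = (m_{ij})$ and of $M^2$. The argument runs exactly as in the proof of the Wiener index theorem for diameter two, with the value $2$ for nonadjacent pairs replaced by the general diameter $D$.

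\textbf{First identity.} Since $M$ is real symmetric, its eigenvalues are real and
\[
\sum_{i=1}^n \mu_i = \operatorname{tr}(M) = \sum_{j=1}^n d(v_j).
\]
By the handshake lemma this sum equals $2m$.

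\textbf{Second identity.} Symmetry of $M$ gives
\[
\sum_{i=1}^n \mu_i^2 = \operatorname{tr}(M^2) = \sum_{i,j} m_{ij}^2 .
\]
I would split this double sum into three kinds of entries.
\begin{enumerate}[(i)]
\item The diagonal entries contribute $\sum_j d(v_j)^2$.
\item Each ordered pair of adjacent vertices contributes $1^2 = 1$. There are $2m$ such ordered pairs, so this part gives $2m$.
\item Each ordered pair at distance $D$ contributes $D^2$. By definition there are $\hat{d}(G)$ such pairs, so this part gives $D^2\hat{d}(G)$.
\end{enumerate}
All other off-diagonal entries are $0$ and contribute nothing.

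\textbf{The one point needing care.} The three cases in the definition of $DAD(G)$ must be mutually exclusive; otherwise an entry could be counted twice or its value would be ambiguous. For $K_n$ they do overlap, since there $D = 1$. Here the hypothesis $D \ge 2$ is exactly what is needed: an adjacent pair is at distance $1 < D$, so no pair is both adjacent and diametrical. Hence each off-diagonal entry is unambiguously $1$, $D$, or $0$.

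Adding the three contributions gives the stated formula for $\sum_{i=1}^n \mu_i^2$.
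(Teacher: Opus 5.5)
Your proposal is correct and follows the paper's proof essentially verbatim. You obtain the first identity from $\operatorname{tr}(DAD(G))=\sum_j d(v_j)=2m$, and the second from $\operatorname{tr}(DAD(G)^2)=\sum_{i,j}m_{ij}^2$ split into diagonal, adjacent, diametrical, and zero entries, just as the paper does; your explicit remark that $D\ge 2$ keeps the adjacent and diametrical cases disjoint is a point the paper uses only implicitly, and it is exactly right.
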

	
	\begin{proof}
		Since $DAD(G)$ is a real symmetric matrix, all its eigenvalues are real. Hence,
		\[
		\sum_{i=1}^{n}\mu_i = \operatorname{tr}(DAD(G)).
		\]
		By the definition of the $DAD$ matrix, the diagonal entry corresponding to the vertex $v_j$ is $d(v_j)$. Therefore,
		\[
		\operatorname{tr}(DAD(G)) = \sum_{j=1}^{n} d(v_j).
		\]
		Using the Handshaking Lemma,
		\[
		\sum_{j=1}^{n} d(v_j) = 2m.
		\]
		Consequently,
		\[
		\sum_{i=1}^{n}\mu_i = 2m.
		\]
		Next, because $DAD(G)$ is symmetric,
		\[
		\sum_{i=1}^{n}\mu_i^2 = \operatorname{tr}(DAD(G)^2).
		\]
		Let $A = (a_{ij}) = DAD(G)$. For any symmetric matrix, the trace of its square equals the sum of the squares of all individual entries:
		\[
		\operatorname{tr}(A^2) = \sum_{i=1}^{n}\sum_{j=1}^{n} a_{ij}^2.
		\]
		We now evaluate this total sum according to the definition of the $DAD$ matrix by partitioning the vertex pairs into four disjoint groups:
		\begin{itemize}
			\item \textbf{Diagonal entries ($i = j$):} Since $a_{ii} = d(v_i)$, their total contribution is:
			\[
			\sum_{i=1}^{n} a_{ii}^2 = \sum_{i=1}^{n} d(v_i)^2.
			\]
			\item \textbf{Adjacent vertices ($d(v_i, v_j) = 1$):} If $v_i$ and $v_j$ are adjacent, then $a_{ij} = 1$. There are exactly $2m$ ordered adjacent pairs, so their total contribution is $2m \cdot (1)^2 = 2m$.
			\item \textbf{Diametrical vertices ($d(v_i, v_j) = D$):} If the distance between $v_i$ and $v_j$ equals the diameter $D$, then $a_{ij} = D$. By definition, the total number of such ordered pairs is $\hat{d}(G)$, yielding a contribution of $D^2\hat{d}(G)$.
			\item \textbf{Remaining pairs ($1 < d(v_i, v_j) < D$):} For all remaining structural pairs, the matrix entries are defined as $a_{ij} = 0$, so they contribute nothing to the sum.
		\end{itemize}
		Summing all of these disjoint cases together, we obtain
		\[
		\operatorname{tr}(DAD(G)^2) = \sum_{j=1}^{n}d(v_j)^2 + 2m + D^2\hat{d}(G).
		\]
		Therefore,
		\[
		\sum_{i=1}^{n}\mu_i^2 = \sum_{j=1}^{n}d(v_j)^2 + 2m + D^2\hat{d}(G).
		\]
		This completes the proof.
	\end{proof}

\begin{cor}
	Let $G$ be a connected graph with $n$ vertices, $m$ edges, and
	diameter $D$. Let $\mu_1,\mu_2,\ldots,\mu_n$ be the eigenvalues of the
	Degree--Adjacency--Diametrical matrix $DAD(G)$. Then the DAD energy
	
	\[
	E_{DAD}(G)=\sum_{i=1}^{n}|\mu_i|
	\]
	
	satisfies
	
	\[
	E_{DAD}(G)\geq
	\sqrt{
		\sum_{v\in V(G)}d(v)^2+2m+D^2\widehat{d}(G)
	},
	\]
	
	where
	
	\[
	\widehat{d}(G)
	=
	|\{(u,v)\in V(G)\times V(G):u\neq v,\ d(u,v)=D\}|
	\]
	
	denotes the number of ordered pairs of vertices at distance $D$.
\end{cor}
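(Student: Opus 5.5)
The plan is to reduce the energy bound to the second trace identity of the preceding theorem. The elementary inequality needed is
\[
\Big(\sum_{i=1}^n |\mu_i|\Big)^2 \;\ge\; \sum_{i=1}^n |\mu_i|^2 = \sum_{i=1}^n \mu_i^2 .
\]
It holds because expanding the square on the left gives the right-hand side plus the cross terms $2\sum_{i<j}|\mu_i||\mu_j|$, and these are nonnegative. Since $DAD(G)$ is real symmetric, the $\mu_i$ are real, so $|\mu_i|^2=\mu_i^2$.

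Next, for $D\ge 2$ we substitute the expression from the previous theorem, namely $\sum_i \mu_i^2 = \sum_{v} d(v)^2 + 2m + D^2\widehat{d}(G)$. Taking square roots is legitimate because both sides are nonnegative, and this gives $E_{DAD}(G)\ge \sqrt{\sum_v d(v)^2+2m+D^2\widehat{d}(G)}$.

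The only point needing care is that the corollary does not assume $D\ge 2$, while the previous theorem does. The case $D=1$ means $G=K_n$, and it needs a separate check. There the adjacent pairs and the diametrical pairs coincide: each off-diagonal entry equals $1$, not $1+D$. So $\operatorname{tr}(DAD(K_n)^2)=\sum_v d(v)^2+2m$, while the stated radicand is $\sum_v d(v)^2+2m+\widehat{d}(G)$ with $\widehat{d}(G)=2m$. The general inequality therefore does not yield the stated bound automatically, and we verify it directly. By the spectrum found earlier for $DAD(K_n)$, and assuming $n\ge 2$,
\[
E_{DAD}(K_n)=(2n-2)+(n-1)(n-2)=n(n-1).
\]
The stated bound is $\sqrt{n(n-1)^2+2n(n-1)}=\sqrt{n(n-1)(n+1)}$. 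Comparing squares, $n^2(n-1)^2\ge n(n-1)(n+1)$ reduces to $n(n-1)\ge n+1$, i.e.\ $n^2-2n-1\ge 0$. This holds for $n\ge 3$.

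The genuinely delicate case is $n=2$. Here the stated bound reads $\sqrt{2+2+2}=\sqrt 6>2=E_{DAD}(K_2)$, so the corollary as literally stated fails for $K_2$. I expect this to be the main obstacle. The resolution I would adopt is to state that the corollary is intended under the hypothesis $D\ge 2$ of the preceding theorem, with $K_n$ ($n\ge3$) verified separately as above. The core argument for $D\ge 2$ is just the two-line inequality.
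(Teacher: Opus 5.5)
Your argument for $D\ge 2$ is exactly the paper's: use $\bigl(\sum_i|\mu_i|\bigr)^2\ge\sum_i\mu_i^2$, substitute the trace identity of the preceding theorem, and take square roots. Your separate treatment of $D=1$ is also correct and catches a flaw the paper misses: the paper's proof cites that theorem without its hypothesis $D\ge 2$, and $E_{DAD}(K_2)=2<\sqrt{6}$, so the corollary as stated fails for $K_2$, while your inequality $n^2(n-1)^2\ge n(n-1)(n+1)$ shows it still holds for $K_n$ with $n\ge 3$.
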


\begin{proof}
	By the definition of the DAD energy,
	
	\[
	E_{DAD}(G)=\sum_{i=1}^{n}|\mu_i|.
	\]
	
	Since each $|\mu_i|$ is non-negative, we have
	
	\[
	\left(\sum_{i=1}^{n}|\mu_i|\right)^2
	\geq
	\sum_{i=1}^{n}|\mu_i|^2
	=
	\sum_{i=1}^{n}\mu_i^2.
	\]
	
	By Theorem~2.8,
	
	\[
	\sum_{i=1}^{n}\mu_i^2
	=
	\sum_{v\in V(G)}d(v)^2
	+
	2m+
	D^2\widehat{d}(G).
	\]
	
	Therefore,
	
	\[
	E_{DAD}(G)^2
	\geq
	\sum_{v\in V(G)}d(v)^2
	+
	2m+
	D^2\widehat{d}(G).
	\]
	
	Taking the positive square root on both sides, we obtain
	
	\[
	E_{DAD}(G)\geq
	\sqrt{
		\sum_{v\in V(G)}d(v)^2+2m+D^2\widehat{d}(G)
	}.
	\]
	
	Hence the result follows.
\end{proof}

	\begin{thm}
		Let $G$ be a connected graph with $n$ vertices, $m$ edges, and diameter $D$. Let $\mu_1 \ge \mu_2 \ge \cdots \ge \mu_n$ be the eigenvalues of the Degree–Adjacency–Diametrical matrix $DAD(G)$. Then
		\[
		\mu_1 \le \frac{2m + \sqrt{(n-1)\left[n\left(\sum_{v\in V(G)}d(v)^2 + 2m + D^2\hat{d}(G)\right) - 4m^2\right]}}{n},
		\]
		where $\hat{d}(G)$ denotes the number of ordered pairs of vertices at distance $D$.
	\end{thm}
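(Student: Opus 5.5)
This is the standard Cauchy--Schwarz argument applied to the trace identities of the preceding theorem, which for $D\ge 2$ give
\[
\sum_{i=1}^n \mu_i = 2m, \qquad \sum_{i=1}^n \mu_i^2 = S := \sum_{v\in V(G)} d(v)^2 + 2m + D^2\hat d(G).
\]
Since $DAD(G)$ is real symmetric, all $\mu_i$ are real, so ordering them and manipulating sums is legitimate. (For $D=1$, i.e.\ $G=K_n$, the formula should be checked separately. Counting the off-diagonal entries once each, $S=n(n-1)^2+n(n-1)$, and the explicit spectrum $\{2n-2,(n-2)^{(n-1)}\}$ from the theorem on $K_n$ confirms that the bound holds with equality.)

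First isolate the largest eigenvalue:
\[
\sum_{i=2}^n \mu_i = 2m-\mu_1, \qquad \sum_{i=2}^n \mu_i^2 = S-\mu_1^2.
\]
Applying Cauchy--Schwarz to the $n-1$ real numbers $\mu_2,\dots,\mu_n$ gives
\[
(2m-\mu_1)^2 \le (n-1)(S-\mu_1^2).
\]

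Expanding this yields the quadratic inequality
\[
n\mu_1^2 - 4m\mu_1 + 4m^2 - (n-1)S \le 0.
\]
Its discriminant is
\[
16m^2 - 4n\bigl(4m^2-(n-1)S\bigr) = 4(n-1)\bigl(nS-4m^2\bigr).
\]
This is nonnegative, since Cauchy--Schwarz on all $n$ eigenvalues gives $4m^2 \le nS$. Hence $\mu_1$ is at most the larger root,
\[
\mu_1 \le \frac{2m+\sqrt{(n-1)\,(nS-4m^2)}}{n}.
\]
Substituting the value of $S$ gives the stated bound.

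None of these steps is a real obstacle; the algebra is routine. The only points needing care are the case $D=1$, which requires consistent pair counting as noted above, and stating explicitly that the eigenvalues are real so that the Cauchy--Schwarz inequality applies.
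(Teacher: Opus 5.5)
Your proposal is correct and is essentially the paper's own proof. Both use the trace identities $\sum\mu_i=2m$ and $\sum\mu_i^2=S$, apply Cauchy--Schwarz to $\mu_2,\dots,\mu_n$, and take the larger root of the resulting quadratic in $\mu_1$.

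Your separate treatment of $D=1$ is more careful than the paper. The paper re-derives $\operatorname{tr}(DAD(G)^2)$ without the hypothesis $D\ge 2$, and for $K_n$ this counts each off-diagonal entry twice. Under the literal value $S=n(n-1)^2+2n(n-1)$ the stated bound for $K_n$ is strict rather than an equality, but it still holds, because the right-hand side is increasing in $S$.
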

	
	\begin{proof}
		Since $DAD(G)$ is a real symmetric matrix, all its eigenvalues are real. Moreover, the sum of the eigenvalues equals the trace of $DAD(G)$. Since the diagonal entries of $DAD(G)$ are the vertex degrees, we have
		\[
		\sum_{i=1}^{n}\mu_i = \operatorname{tr}(DAD(G)) = \sum_{v\in V(G)}d(v) = 2m,
		\]
		by the Handshaking Lemma.
		
		Again, since $DAD(G)$ is symmetric,
		\[
		\sum_{i=1}^{n}\mu_i^2 = \operatorname{tr}\left(DAD(G)^2\right).
		\]
		The diagonal entries of $DAD(G)^2$ are obtained by squaring the entries of each row of $DAD(G)$. Hence,
		\[
		\operatorname{tr}\left(DAD(G)^2\right) = \sum_{v\in V(G)}d(v)^2 + 2m + D^2\hat{d}(G),
		\]
		where $2m$ is the contribution from the adjacent vertex pairs and $D^2\hat{d}(G)$ is the contribution from the ordered pairs of vertices at distance $D$. Therefore,
		\[
		\sum_{i=1}^{n}\mu_i^2 = \sum_{v\in V(G)}d(v)^2 + 2m + D^2\hat{d}(G).
		\]
		Now, we can isolate the first eigenvalue:
		\[
		\sum_{i=2}^{n}\mu_i = 2m - \mu_1.
		\]
		Applying the Cauchy–Schwarz inequality to the $n-1$ numbers $\mu_2, \mu_3, \dots, \mu_n$, we obtain
		\[
		\left(\sum_{i=2}^{n}\mu_i\right)^2 \le (n-1)\sum_{i=2}^{n}\mu_i^2.
		\]
		Hence,
		\[
		(2m - \mu_1)^2 \le (n-1)\sum_{i=2}^{n}\mu_i^2.
		\]
		Since $\sum_{i=2}^{n}\mu_i^2 = \sum_{i=1}^{n}\mu_i^2 - \mu_1^2$, it follows that
		\[
		(2m - \mu_1)^2 \le (n-1)\left(\sum_{v\in V(G)}d(v)^2 + 2m + D^2\hat{d}(G) - \mu_1^2\right).
		\]
		Expanding the left-hand side yields
		\[
		4m^2 - 4m\mu_1 + \mu_1^2 \le (n-1)\left(\sum_{v\in V(G)}d(v)^2 + 2m + D^2\hat{d}(G)\right) - (n-1)\mu_1^2.
		\]
		Collecting all terms involving $\mu_1$ on the left side of the inequality, we get
		\[
		n\mu_1^2 - 4m\mu_1 + 4m^2 - (n-1)\left(\sum_{v\in V(G)}d(v)^2 + 2m + D^2\hat{d}(G)\right) \le 0.
		\]
		This is a quadratic inequality in $\mu_1$. Since the coefficient of $\mu_1^2$ is positive ($n > 0$), $\mu_1$ must be bounded above by the larger root of the corresponding quadratic equation. Consequently,
		\[
		\mu_1 \le \frac{2m + \sqrt{4m^2 - n\left[4m^2 - (n-1)\left(\sum_{v\in V(G)}d(v)^2 + 2m + D^2\hat{d}(G)\right)\right]}}{n}.
		\]
		Using the algebraic identity
		\[
		4m^2 - n\left[4m^2 - (n-1)S\right] = (n-1)(nS - 4m^2),
		\]
		where $S = \sum_{v\in V(G)}d(v)^2 + 2m + D^2\hat{d}(G)$, we can simplify the expression under the square root to obtain:
		\[
		\mu_1 \le \frac{2m + \sqrt{(n-1)\left[n\left(\sum_{v\in V(G)}d(v)^2 + 2m + D^2\hat{d}(G)\right) - 4m^2\right]}}{n}.
		\]
		This completes the proof.
	\end{proof}

\section{Conclusion}

In this paper, we introduced the Degree--Adjacency--Diametrical (DAD)
matrix, a new graph matrix that combines local connectivity information
through adjacency and vertex degrees with global extremal distance
information through graph diameter. Inspired by recent structural
variations such as the adjacency--diametrical matrix, the proposed matrix
provides a unified framework for investigating hybrid spectral properties
of graphs. We derived explicit characteristic polynomials and complete
spectra for several important graph classes, including paths, cycles,
complete graphs, and complete multipartite graphs. Furthermore, we
established trace identities, obtained lower bounds for the DAD energy,
and explored the relationship between the DAD spectrum and the Wiener index
for graphs with diameter two.

\end{document}